\documentclass[12pt,a4paper]{amsart}
\usepackage[cp1250]{inputenc}

\usepackage[OT4]{fontenc} 

\newtheorem{tw}{Theorem}[section]
\newtheorem*{re}{Remark}
\newtheorem{df}[tw]{Definition}
\newtheorem{co}[tw]{Corollary}

\newtheorem{lem}[tw]{Lemma}

\theoremstyle{definition}
\newtheorem{ex}[tw]{Example}

\author{Patryk Pagacz}
\title{On the power-bounded operators of classes $C_{0 \cdot}$ and $C_{1 \cdot}$}

\newcommand{\h}{\ensuremath{\mathcal H}}
\newcommand{\N}{\ensuremath{\mathbb N}}
\newcommand{\M}{\ensuremath{\mathcal M}}
\newcommand{\K}{\ensuremath{\mathcal K}}

\usepackage{amssymb}

\begin{document}
\maketitle

\begin{abstract}
By a bounded backward sequence of the operator $T$ we mean a bounded sequence $\{x_n\}$ satisfying $Tx_{n+1}=x_n$.
In \cite{Pa} we have characterized contractions with strongly stable nonunitary part in terms of bounded backward sequences.

The main purpose of this work is to extend that result to power-bounded operators.

Aditionally, we show that a power-bounded operator is strongly stable ($C_{0 \cdot} $) if and only if its adjoint does not have any nonzero bounded backward sequence.
Similarly, a power-bounded operator is non-vanishing ($C_{1 \cdot} $) if and only if its adjoint has a lot of bounded backward sequences.
\end{abstract}

\footnote{Key words and phrases: power-bounded operators, $C_{0 \cdot}$ operators, $C_{\cdot 0}$ operators, strongly stable operators.\\
AMS(MOS) subject classifcation (2010): 47A05, 47A45, 47B37, 47G10.
}

\section{Preliminaries} Let $\h$ be a complex, separable Hilbert space. We denote by $\mathcal{B}(\h)$ the space of bounded linear transformations acting on $\h$. By a \emph{contraction} we mean
$T\in \mathcal{B}(\h)$ such that $\|Tx\|\leq \|x\|$ for each $x\in \h$. By a \emph{power-bounded} operator we mean $T\in \mathcal{B}(\h)$ such that $\|T^n\|$ is uniformly bounded for all $n=1,2,3,...$.

An operator $T$ is said to be \emph{completely nonunitary} (abbreviated \emph{cnu}) if $T$ restricted to every reducing subspace of \h\ is nonunitary. As usual, by $T^*$ we mean the adjoint of $T$.

We define as usually:
\begin{df} An operator $T\in \mathcal{B}(\h)$ is said to be of class $C_{0 \cdot}$ if $$\liminf\limits_{n\to \infty} \|T^n x\|= 0$$ for each $x \in \h$. \end{df}

Note that a power-bounded operator $T$ is of class $C_{0 \cdot}$ if and only if it is strongly stable ($T^n \to 0$, SOT).

Indeed, let $T$ be $C_{0 \cdot}$. If we fix $x \in \h$ then for each $\epsilon >0$ there is $k\in \N$ such that $\|T^kx\|<\epsilon$, so for all $m>k$ we have $$\|T^mx\| = \|T^{m-k}T^kx\|\leq
\|T^{m-k}\|\|T^kx\| \leq  \epsilon\sup\limits_{n\in \N} \|T^n\|.$$

In general, $C_{0 \cdot}$ operators can be extremely different from strongly stable operators. The following example shows a bounded operator of class $C_{0 \cdot}$, which is not strongly stable at any (nonzero) point.

\begin{ex}
Let $\{N_k\}_k$ be the sequence such that
$$\left\{
\begin{array}{rl}
 N_1&=1 \\
 N_{k+1}&=3N_k+2N_k^2 \textnormal{ , for } k=1,2,... \\
\end{array}
\right. $$

Then let us define the operator $S$ as the unilateral shift with weights $w_1,w_2,...$, i.e.,  $S : l^2 \ni (x_1,x_2,...) \mapsto (0,w_1x_1,w_2x_2,...) \in l^2$,
where
$$\left\{
\begin{array}{rl}
 w_1&=1 \\
 w_i&=\frac12 \textnormal{ , for } i=N_k+1,N_k+2,...,3N_k \\
 w_i&=2^{\frac{1}{N_{k}}} \textnormal{ , for } i=3N_k+1,3N_k+2,...,N_{k+1}.
\end{array}
\right. $$

For nonzero $x=(x_1,x_2,...)\in l^2$ there is $i_0$ such that $x_{i_0} \not= 0$.\\
But by definition of $S$ we have $S^{N_k}e_1=e_{N_k+1}$ for all $k\in\N$, thus $\|S^{N_k-i_0}x\|\geq \|S^{N_k-i_0}x_{i_0}e_{i_0}\|=\frac{1}{w_1w_2\cdot...\cdot w_{i_0}}|x_{i_0}|$.\\So $S^nx\not\to 0$ for all nonzero $x \in l^2$.

Now we show that $S$ is of class $C_{0 \cdot}$.\\
Fix $x=(x_1,x_2,...)\in l^2$ and $\epsilon>0$. We can assume $\|x\|=1$.\\
Since $\{N_k\}_k$ increases, then there is $N\in\{N_k| k=1,2,...\}$ such that $\sum\limits_{i=N+1}^{\infty} |x_i|^2 < \frac{\epsilon}{32}$ and $\left(\frac{1}{2^N}\right)^2 < \frac{\epsilon}{2}$. By that we obtain:
$$\|S^{2N}x\|^2 =\sum\limits_{i=1}^{N} |x_i|^2\|S^{2N}e_i\|^2 + \sum\limits_{j=N+1}^{\infty} |x_j|^2\|S^{2N}e_j\|^2 =$$
$$=\sum\limits_{i=1}^{N}|x_i|^2\|S^N(w_{i}w_{i+1}...w_{i+N-1}e_{i+N})\|^2+\sum\limits_{j=N+1}^{\infty}|x_j|^2|w_{j}w_{j+1}...w_{j+2N-1}|^2\leq$$
$$\leq\sum\limits_{i=1}^{N}|x_i|^2\|w_{i}w_{i+1}\cdot...\cdot w_N \cdot \left(\frac12\right)^{i-1} S^Ne_{i+N} \|^2+$$ $$+\sum\limits_{j=N+1}^{\infty}|x_j|^2| \underbrace{2^{\frac1N}2^{\frac1N}\cdot...\cdot2^{\frac1N}}_{2N} |^2
\leq \sum\limits_{i=1}^{N}|x_i|^2\|S^Ne_{i+N}\|^2+\sum\limits_{j=N+1}^{\infty}|x_j|^2 4^2=$$ $$=\sum\limits_{i=1}^{N}|x_i|^2\|\left(\frac12\right)^Ne_{i+2N}\|^2 +\frac{\epsilon}{32}16 \leq \|x\|\frac{\epsilon}{2} + \frac{\epsilon}{2} = \epsilon. \ \ \square$$
\end{ex}

In contrast to the above notion we have:

\begin{df} An operator $T\in \mathcal{B}(\h)$ is said to be of class $C_{1 \cdot}$ if $$\liminf\limits_{n\to \infty} \|T^n x\|> 0$$ for each nonzero $x \in \h$. \end{df}

Operators of class $C_{1 \cdot}$ are also called \emph{non-vanishing}.\\
We also say that $T$ is of class $C_{\cdot 0}$ or $C_{\cdot 1}$ if its adjoint is of class $C_{0 \cdot}$ or $C_{1 \cdot}$, respectively.

Let us define $\mathcal{M}(T) := \{x \in \mathcal{H} |\  \exists \{x_n\}_{n \in \mathbb{N}} : x=x_0, Tx_{n+1}=x_n $ and $ \{x_n\}_{n \in \mathbb{N}} $ is bounded $ \}$.
Naturally, such a sequence $\{x_n\}_{n\in \N}$ can be called as the \emph{bounded backward sequence}.
\section{Introduction}

In the paper \cite{Pa} we have presented the following theorem with some applications.
\begin{tw}\label{DK} Let $T$ be a contraction. The following conditions are equivalent:
\begin{itemize}
\item for any bounded backward sequence $\{x_n\}_{n\in \N}$ of $T$, the sequence of norms $\{\|x_n\|\}_{n\in \N}$ is constant,
\item the nonunitary part of $T$ is of class $C_{\cdot 0}$.
\end{itemize}
\end{tw}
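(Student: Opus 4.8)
The plan is to reduce the statement to the completely nonunitary (cnu) case and then to express both conditions through the defect operator $D_{T}:=(I-T^{*}T)^{1/2}$ and the asymptotic operator $A_{*}:=\lim_{n\to\infty}T^{n}T^{*n}$, the limit taken in the strong operator topology (it exists since the positive operators $T^{n}T^{*n}$ decrease). Writing $T=U\oplus T_{c}$ for the canonical splitting into a unitary part $U$ and a cnu part $T_{c}$, a bounded backward sequence of $T$ is precisely a pair consisting of a bounded backward sequence of $U$ and one of $T_{c}$; and a backward sequence $\{u_{n}\}$ of a unitary $U$ has $u_{n+1}=U^{-1}u_{n}$, hence constant norm. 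So the first condition holds for $T$ if and only if it holds for $T_{c}$, and the theorem reduces to: \emph{for a cnu contraction $T$, every bounded backward sequence has constant norm if and only if $T\in C_{\cdot 0}$.}

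Two reformulations set things up. First, along any backward sequence $\|x_{n}\|=\|Tx_{n+1}\|$ is nondecreasing (as $T$ is a contraction), so "constant norm" is the same as $D_{T}x_{n}=0$ for every term; since each tail of a backward sequence is again one and one may prepend $x_{-1}:=Tx_{0}$, the first condition is equivalent to $\M(T)\subseteq\ker D_{T}$. Second, one has $\langle A_{*}x,x\rangle=\lim_{n}\|T^{*n}x\|^{2}$, $TA_{*}T^{*}=A_{*}$, and $\ker A_{*}=\{x:\ T^{*n}x\to 0\}=:\h_{0}$; since for a contraction $T^{*}\in C_{0\cdot}$ means $T^{*n}\to 0$ strongly, this gives $T\in C_{\cdot 0}\iff A_{*}=0$. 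The implication $T\in C_{\cdot 0}\Rightarrow$ constant norms is then immediate: if $A_{*}=0$ then a bounded backward sequence satisfies $|\langle x_{m},y\rangle|=|\langle x_{m+k},T^{*k}y\rangle|\le(\sup_{n}\|x_{n}\|)\|T^{*k}y\|\to 0$, so every $x_{m}=0$ and $\M(T)=\{0\}\subseteq\ker D_{T}$.

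For the converse I argue contrapositively: suppose $T$ is cnu with $A_{*}\ne 0$, so that $\h_{1}:=\h_{0}^{\perp}=\overline{\mathrm{ran}\,A_{*}}$ is a nonzero, $T$-invariant subspace, and I must find a bounded backward sequence of non-constant norm, i.e.\ show $\M(T)\not\subseteq\ker D_{T}$. The first point is that $\mathrm{ran}\,A_{*}\subseteq\M(T)$: for any $h$ the sequence $x_{n}:=A_{*}T^{*n}h$ is bounded by $\|h\|$ and satisfies $Tx_{n+1}=(TA_{*}T^{*})T^{*n}h=A_{*}T^{*n}h=x_{n}$. Hence it suffices to prove $D_{T}A_{*}\ne 0$, equivalently $\h_{1}\not\subseteq\ker D_{T}$. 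Assume for contradiction that $\h_{1}\subseteq\ker D_{T}$; then $T_{1}:=T|_{\h_{1}}$ is an isometry. Because $\ker A_{*}=\h_{0}$, the operator $A_{*}$ maps $\h$ into $\h_{1}$ and annihilates $\h_{0}$, and $A_{11}:=A_{*}|_{\h_{1}}$ is positive and injective on $\h_{1}$; using $T\h_{1}\subseteq\h_{1}$ one sees that the identity $TA_{*}T^{*}=A_{*}$ reduces, on $\h_{1}$, to $T_{1}A_{11}T_{1}^{*}=A_{11}$. Finally, as $T$ is cnu, the largest subspace of $\h_{1}$ on which $T_{1}$ acts unitarily is reducing for $T$ (here one uses $\h_{1}\subseteq\ker D_{T}$), hence zero, so $T_{1}$ is a unilateral shift and $T_{1}^{*n}\to 0$ strongly; then $\langle A_{11}x,x\rangle=\langle A_{11}T_{1}^{*n}x,T_{1}^{*n}x\rangle\le\|A_{11}\|\,\|T_{1}^{*n}x\|^{2}\to 0$, forcing $A_{11}=0$ and contradicting injectivity on $\h_{1}\ne\{0\}$.

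The routine parts are the reduction to the cnu case, the two reformulations, and the easy implication. I expect the real obstacle to be the last argument: first realizing that $\mathrm{ran}\,A_{*}$ already produces backward sequences, and then extracting the contradiction from the conjunction "$T_{1}$ isometric, $T$ cnu, $TA_{*}T^{*}=A_{*}$" via the Wold/cnu observation that reduces $T_{1}$ to a unilateral shift.
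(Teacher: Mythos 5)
Your argument is correct. Note first that the paper does not reprove Theorem \ref{DK} at all --- it is quoted from \cite{Pa} --- and its surrounding machinery (the isometric asymptote $X$ with $XT^*=VX$ built from a Banach limit, Sebesty\'en's range theorem giving $X^*(\K)=\M(T)$, the decomposition $\h=\{x:T^{*n}x\to 0\}\oplus\overline{\M(T)}$, and K\'erchy's $C_{0\cdot}/C_{1\cdot}$ triangularization) is designed for the power-bounded generalization in Section 4. Your route is the contraction-specific analogue and is genuinely more elementary: the asymptotic limit $A_*=\lim T^nT^{*n}$ exists as an honest SOT limit (no Banach limit needed), the identity $TA_*T^*=A_*$ lets you write down the bounded backward sequence $x_n=A_*T^{*n}h$ explicitly, which replaces the appeal to Sebesty\'en's theorem in the paper's Lemma \ref{lemma} and gives $\mathrm{ran}\,A_*\subseteq\M(T)$ directly; your easy implication is essentially the paper's inequality (1). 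The reformulation of ``constant norms'' as $\M(T)\subseteq\ker D_T$ (via prepending $Tx_0$ and passing to tails), the reduction to the cnu part, and the final Wold-type step --- where $\h_1\subseteq\ker D_T$ forces $T|_{\h_1}$ to be an isometry whose unitary Wold summand reduces $T$ (because $\|Tx\|=\|x\|$ gives $T^*Tx=x$ there) and hence vanishes by complete nonunitarity, so $T_1^{*n}\to 0$ strongly and $A_{11}=T_1^nA_{11}T_1^{*n}\to 0$ --- are all sound; this last step is the same Wold-decomposition idea the paper uses at the end of its main theorem, but run in the contrapositive direction. What you lose relative to the paper's framework is generality: $A_*$ as a decreasing SOT limit and the defect operator $D_T$ are unavailable for merely power-bounded operators, which is exactly why the paper resorts to Banach limits and K\'erchy's lemma for its extension.
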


We have been asked the natural question about a possible generalization to power-bounded operators.
In this work we will try to answer this question.

The easy extension of the above theorem for power-bounded operators is not true.
To see this, let us consider the following: \begin{ex}

$$\textnormal{Let } T: l^2 \ni (x_1,x_2,x_3,...) \mapsto (0,x_1+x_2,0,x_3+x_4,0,...)\in l^2.$$ It is clear that $T$ is power-bounded, in fact $T=T^2$. Additionally, if $x=(a_1,a_2,a_3,...)\in \M(T) \subset T(\h)$, then $a_{2k+1}=0$ for all $k\in \N$.\\ Hence $T^{-1}(\{x\})=\{x\}$. Thus, even any (not necessary bounded) backward sequence of $T$ must be constant.

On the other hand, $T$ has trivial unitary part and is not $C_{\cdot 0}$,\\
since $T^*=T^{*2}$.
\end{ex}

\section{Characterization of $C_{1 \cdot}$ and $C_{0 \cdot}$ power-bounded operators}

To introduce the next theorem, let us recall the construction of isometric asymptotes (see \cite{Ke}).

Let us define a new semi-inner product on $\h$: $$[x,y] := \textnormal{glim} \{\langle T^{*n}x, T^{*n}y \rangle\}_{n\in \mathbb{N}},$$ where \textnormal{glim} denote a Banach limit.\\
Thus, the factor space $\h/\h_0$, where $\h_0$ stands for the linear manifold $\h_0:=\{x\in \h | [x,x]=0\}$, endowed with the inner product $[x+\h_0,y+\h_0] = [x,y]$, is an inner product space. Let $\K$ denote the resulting Hilbert space obtained by completion. Let $X$ denote the natural embedding of Hilbert space $\h$ into $\K$ i.e. $X : \h\ni x \mapsto x+\h_0 \in \K$.\\
We can see that: $\|XT^*x\|=\|Xx\|$.
So there is an isometry $V:\K\to \K$ such that $XT^*=VX$. The isometry $V$ is called \textit{isometric asymptote}.

\begin{lem}\label{lemma}

For any power-bounded operator $T \in \mathcal{B}(\h)$ the corresponding $X$ from the construction above satisfies:
$$X^*(\K)=\M(T).$$

\end{lem}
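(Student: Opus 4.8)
The plan is to prove the two inclusions separately. For $X^*(\K)\subseteq\M(T)$ I would argue by an explicit construction, using only that $V$ is an isometry; for $\M(T)\subseteq X^*(\K)$ I would reduce, via the asymptotic semi‑inner product, to the majorization (Douglas) theorem. Two preliminary identities will be used throughout: taking adjoints in the intertwining relation $XT^*=VX$ gives $TX^*=X^*V^*$, and directly from the construction of $\K$ one has, for every $y\in\h$,
$$\langle X^*Xy,y\rangle=\|Xy\|_{\K}^2=[y,y]=\mathrm{glim}_n\|T^{*n}y\|^2 .$$

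\emph{The inclusion $X^*(\K)\subseteq\M(T)$.} Fix $k\in\K$ and put $x_n:=X^*V^nk$ for $n=0,1,2,\dots$. Since $V$ is an isometry, $\|V^nk\|=\|k\|$, hence $\|x_n\|\le\|X^*\|\,\|k\|$ and $\{x_n\}$ is bounded. Using $TX^*=X^*V^*$ together with $V^*V=I$,
$$Tx_{n+1}=TX^*V^{n+1}k=X^*V^*V^{n+1}k=X^*V^nk=x_n ,$$
so $\{x_n\}$ is a bounded backward sequence of $T$ with $x_0=X^*k$; thus $X^*k\in\M(T)$.

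\emph{The inclusion $\M(T)\subseteq X^*(\K)$.} Let $x\in\M(T)$ and choose a bounded backward sequence $\{x_n\}$ with $x_0=x$ and $\sup_n\|x_n\|\le M$. From $Tx_{n+1}=x_n$ one gets $T^nx_n=x$, hence for every $y\in\h$ and every $n$
$$|\langle x,y\rangle|=|\langle T^nx_n,y\rangle|=|\langle x_n,T^{*n}y\rangle|\le M\|T^{*n}y\| .$$
Squaring and applying a Banach limit in $n$ (Banach limits being positive and monotone) yields
$$|\langle x,y\rangle|^2\le M^2\,\mathrm{glim}_n\|T^{*n}y\|^2=M^2\langle X^*Xy,y\rangle\qquad(y\in\h).$$
By Douglas' majorization theorem, applied to the operators $\mathbb{C}\ni\mu\mapsto\mu x\in\h$ and $X^*$, this inequality is equivalent to $x\in\mathrm{ran}(X^*)=X^*(\K)$, which finishes the proof.

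I expect the second inclusion to be the crux: one has to recognize that the two facts $\|x_n\|\le M$ and $x=T^nx_n$ say \emph{precisely} that the functional $y\mapsto\langle x,y\rangle$ is dominated by $M\,[y,y]^{1/2}=M\|Xy\|_{\K}$, which is exactly the hypothesis of Douglas' lemma for $X^*$. If one prefers not to invoke Douglas, one can instead try to produce the required $k\in\K$ as a weak cluster point of a bounded sequence in $\K$ built from $\{x_n\}$, but identifying its image under $X^*$ is delicate precisely because $T$ need not be injective; the majorization route avoids this.
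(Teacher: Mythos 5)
Your proof is correct and follows essentially the same route as the paper: the same explicit construction $x_n=X^*V^nk$ for the easy inclusion, and the same estimate $|\langle x,y\rangle|\le M\|T^{*n}y\|$ followed by a range-of-adjoint criterion for the hard one. The only cosmetic difference is that the paper passes to $\liminf_n\|T^{*n}y\|\le\|Xy\|$ and cites Sebesty\'en's theorem, whereas you use a Banach limit and Douglas' majorization lemma --- these are the same criterion in this setting.
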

\begin{proof}
By definition of $V$ and $X$, we have $TX^*=X^*V^*$.
Let $x_n:=X^*V^nx$, then $$Tx_{n+1}= TX^*V^{n+1}x=X^*V^*V^{n+1}x=x_n.$$
Moreover $\|x_n\|\leq \|X^*\|\|x\|$, for all $n\in \mathbb{N}$.
Thus $X^*x=x_0\in \M(T)$, where $x \in \h$. Hence $X^*(\h)\subset\M(T)$.

To prove the converse, let us fix $x\in\M(T)$. By definition of $\M(T)$, there exists $\{x_n\}_{n\in \N}\subset \h$, a bounded backward sequence of $x$.\\
Let $y\in \h$, then
\begin{equation}|\langle x,y\rangle |= |\langle T^nx_n,y \rangle |= |\langle x_n,T^{*n}y \rangle |\leq \|x_n\|\|T^{*n}y\|.
\end{equation}
So $|\langle x,y\rangle |\leq \sup\limits_{n\in \mathbb{N}} \|x_n\| \liminf\limits_{n\to \infty} \|T^{*n}y\|\leq  \sup\limits_{n\in \mathbb{N}} \|x_n\| \|Xy\|$.
So by Theorem 1 in \cite{Sebe}, we have $x\in X^*(\K)$.
\end{proof}

At the begining, we have observed that if $\liminf\limits_{n\to \infty} \|T^{*n}x\|=0$, for some $x\in \h$, then $\lim\limits_{n\to \infty} \|T^{*n}x\|=0$. So we have:
 $$\{x\in \h| T^{*n}x\to 0\}=\mathcal{N}(X).$$

Now by Lemma \ref{lemma} we obtain:

\begin{co}\label{rozklad}
Let $T$ be a power-bounded operator, then
$$ \h = \{x\in \h| T^{*n}x\to 0\} \oplus \overline{\M(T)}. $$
\end{co}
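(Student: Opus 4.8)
The plan is to reduce everything to the elementary Hilbert-space identity relating the kernel of a bounded operator to the closure of the range of its adjoint. All the substantive work has already been done: Lemma \ref{lemma} identifies $\M(T)$ with $X^*(\K)$, and the remark immediately preceding the corollary identifies $\{x\in\h : T^{*n}x\to 0\}$ with $\mathcal{N}(X)$.

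First I would note that $X$ is a bounded operator: since $\|Xx\|^2=\operatorname{glim}\{\|T^{*n}x\|^2\}\le \big(\sup_{n}\|T^{*n}\|\big)^2\|x\|^2$, we have $\|X\|\le\sup_n\|T^{*n}\|<\infty$ by power-boundedness. Hence $\mathcal{N}(X)$ is a closed subspace of $\h$, and $\h$ splits orthogonally as $\h=\mathcal{N}(X)\oplus\mathcal{N}(X)^{\perp}$.

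Next I would invoke the standard fact that for a bounded operator $X\colon\h\to\K$ one has $X^*(\K)^{\perp}=\mathcal{N}(X)$, and therefore $\overline{X^*(\K)}=\mathcal{N}(X)^{\perp}$. (The inclusion $y\perp X^*(\K)\iff \langle Xy,z\rangle=0\ \forall z\iff Xy=0$ is immediate.) Combining this with Lemma \ref{lemma}, which gives $\M(T)=X^*(\K)$, yields $\overline{\M(T)}=\mathcal{N}(X)^{\perp}$. Substituting into the orthogonal decomposition and using $\mathcal{N}(X)=\{x\in\h : T^{*n}x\to 0\}$ gives exactly
$$\h=\{x\in\h : T^{*n}x\to 0\}\oplus\overline{\M(T)},$$
as claimed.

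There is essentially no hard step here; the only point that requires a moment's care is that $X$ need not be injective nor have closed range, so one must pass to closures on the $\M(T)$ side and rely on $X$ being bounded (not merely densely defined) to get a genuine orthogonal complement. Everything else is a formal consequence of the two identifications established earlier in the section.
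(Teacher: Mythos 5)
Your proof is correct and is exactly the argument the paper intends: the corollary is stated as an immediate consequence of Lemma \ref{lemma} together with the identification $\{x\in\h \mid T^{*n}x\to 0\}=\mathcal{N}(X)$, and the decomposition is just $\h=\mathcal{N}(X)\oplus\mathcal{N}(X)^{\perp}=\mathcal{N}(X)\oplus\overline{X^*(\K)}$. Your write-up merely makes explicit the boundedness of $X$ and the standard identity $\overline{X^*(\K)}=\mathcal{N}(X)^{\perp}$, which the paper leaves to the reader.
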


We also have:

\begin{tw}\label{wn1} A power-bounded operator $T$ is $C_{\cdot 1}$ if and only if $\overline{\M(T)}=\h$. \end{tw}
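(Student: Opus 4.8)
The plan is to obtain the theorem as an essentially immediate consequence of Corollary~\ref{rozklad}, the only real work being to rephrase the defining property of $C_{\cdot 1}$ as a statement about the subspace $\mathcal{N}(X)$.

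First I would unwind the definitions. Saying that $T$ is $C_{\cdot 1}$ means, by definition, that $T^*$ is $C_{1\cdot}$, i.e. $\liminf_{n\to\infty}\|T^{*n}x\|>0$ for every nonzero $x\in\h$. Since $T$ power-bounded forces $T^*$ power-bounded with $\sup_n\|T^{*n}\|=\sup_n\|T^n\|$, the observation recorded just before Corollary~\ref{rozklad} — that $\liminf_{n}\|T^{*n}x\|=0$ already implies $\lim_n\|T^{*n}x\|=0$ — is available for $T^*$ as well. Consequently, $T$ is $C_{\cdot 1}$ if and only if there is no nonzero $x\in\h$ with $T^{*n}x\to 0$; in the notation of the excerpt this says exactly that $\mathcal{N}(X)=\{0\}$.

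Then I would invoke Corollary~\ref{rozklad}, which provides the orthogonal decomposition $\h=\mathcal{N}(X)\oplus\overline{\M(T)}$ (this is the standard identity $\h=\mathcal{N}(X)\oplus\overline{X^*(\K)}$ combined with $X^*(\K)=\M(T)$ from Lemma~\ref{lemma}). From this splitting, $\mathcal{N}(X)=\{0\}$ holds precisely when $\overline{\M(T)}=\h$. Stringing the equivalences together gives the statement, so the whole argument is a short chain of ``iff''s rather than a computation.

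The only thing that needs care — hence the ``main obstacle,'' such as it is — is bookkeeping: checking that the $\liminf=0\Rightarrow\lim=0$ passage legitimately transfers from $T$ to $T^*$, and that $\overline{\M(T)}$ really is the orthogonal complement of $\mathcal{N}(X)$ and not something larger. Both points are routine once Lemma~\ref{lemma} and Corollary~\ref{rozklad} are in hand; the genuine content of the theorem has already been absorbed into those two results, and I would present the proof accordingly, spelling out only the chain of equivalences.
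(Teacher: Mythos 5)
Your proof is correct and is exactly the argument the paper intends: the theorem is stated as an immediate consequence of Corollary~\ref{rozklad}, with the only extra step being the observation (already made for power-bounded operators earlier in the paper, and applicable to $T^*$) that $\liminf_n\|T^{*n}x\|=0$ forces $T^{*n}x\to 0$, so that $C_{\cdot 1}$ is equivalent to $\mathcal{N}(X)=\{0\}$. No gaps.
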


It is trivial that $\M(T)$ is included in the set of origins of all backward sequences, that is, $T^{\infty}(\h) := \bigcap\limits_{n\in \N}T^n(\h)$. But in general, the converse inclusion does not hold, even for $C_{\cdot 1}$ contractions.

To see this let us consider the following example.\\
\begin{ex}
Let $\h = l^2$. Then $\mathbb{H} := l^2(\h) = \{ \{x_n\}_{n\in \N}\subset \h | \sum\limits_{n\in \N} \|x_n\|^2 < \infty \}$ is a
separable Hilbert space, with the norm $\|\{x_n\}_{n\in \N}\| := \sqrt{\sum\limits_{n\in \N} \|x_n\|^2}$. For the element $\{\textbf{x}_n\}_{n\in \N} \in \mathbb{H}$ sometimes we write $\bigoplus\limits_{n\in \N} \textbf{x}_n$.\\
Let $S_w$ be the backward unilateral shift with weights $w=(w_1,w_2,...)$, i.e., $S_w : \h \ni (x_1,x_2,...) \mapsto (w_1x_2,w_2x_3...) \in \h$. If we put $w_i^n =(\frac{1}{n})^{\frac{1}{i-1}-\frac{1}{i}}$ for all $n \in \N $ and $ i >2$, and $w_1^n=w_2^n=1$ for all $ n \in \N $, then $T=\bigoplus\limits_{n \in \N} S_{w^n}$ is a $C_{\cdot 1}$ contraction.\\
Indeed, $T^m=\bigoplus\limits_{n \in \N} S^m_{w^n}$, where $S^m_{w^n}(x_1,x_2,x_3,...) = (w_1^nw_2^n\cdot...\cdot w_m^nx_{m+1},w_2^nw_3^n\cdot...\cdot w_{m+1}^nx_{m+2},...)$ and $\lim\limits_{m\to\infty} w^n_1w^n_2\cdot...\cdot w^n_m =  \lim\limits_{m\to \infty} (\frac{1}{n})^{\frac{1}{2}-\frac{1}{m}} =\frac{1}{n^2} > 0 $.\\
Now, let us consider $x=\bigoplus\limits_{n \in \N} (\frac{1}{n},0,0,...)\in
\mathbb{H}$.\\
For $m\in \N$ we have $x=T^ma_m$, where \\
$a_m=\bigoplus\limits_{n \in \N} (\underbrace{0,0,...,0}_m,(\frac{1}{n})^{\frac{1}{2}+\frac{1}{m}},0,0,...) \in \mathbb{H}$. Thus
$x\in T^{\infty}(\h)$.\\ Now let $\{b_m\}_{m\in \N} \subset \mathbb{H}$ be a backward sequence for $x$, then \\
$x=T^mb_m$ and thus $b_m=\bigoplus\limits_{n \in \N}(q_1,q_2,...,q_m,(\frac{1}{n})^{\frac{1}{2}+\frac{1}{m}},0,0,...)$ for some complex $q_1,q_2,...,q_m$. So $\|a_m\|\leq \|b_m\|$, but \\
$\|a_m\|^2=\sum\limits_{n\in \N}(\frac{1}{n})^{2(\frac{1}{2}+\frac{1}{m})} \to \infty ($ for $ m\to \infty)$. Hence $x\not\in \M(T)$. \end{ex}

One more consequence of Corollary \ref{rozklad} is the following:

\begin{tw}\label{wn0} A power-bounded operator $T$ is $C_{\cdot 0}$ if and only if $\M(T) = \{0\}$. \end{tw}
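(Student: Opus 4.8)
The plan is to derive Theorem~\ref{wn0} directly from Corollary~\ref{rozklad}, exactly as Theorem~\ref{wn1} presumably follows from it. Recall that Corollary~\ref{rozklad} gives the orthogonal decomposition $\h = \{x : T^{*n}x\to 0\} \oplus \overline{\M(T)}$, and that $T$ being of class $C_{\cdot 0}$ means $T^*$ is of class $C_{0\cdot}$, i.e. $T^{*n}x\to 0$ for every $x\in\h$ (using the remark that for power-bounded $T$ the $\liminf$ condition upgrades to a genuine limit). So the first observation is simply that $T$ is $C_{\cdot 0}$ if and only if $\{x\in\h : T^{*n}x\to 0\} = \h$.

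From there the argument is a two-line equivalence. If $T$ is $C_{\cdot 0}$, then the first summand in Corollary~\ref{rozklad} is all of $\h$, forcing $\overline{\M(T)} = \{0\}$ and hence $\M(T)=\{0\}$. Conversely, if $\M(T)=\{0\}$ then $\overline{\M(T)}=\{0\}$, so the decomposition collapses to $\h = \{x : T^{*n}x\to 0\}$, which is exactly the statement that $T^*$ is $C_{0\cdot}$, i.e. $T$ is $C_{\cdot 0}$. I would write these two implications out symmetrically, each citing Corollary~\ref{rozklad} and the preceding remark identifying $\{x : T^{*n}x\to 0\}$ with $\mathcal{N}(X)$.

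I do not anticipate a genuine obstacle here: all the real work — Lemma~\ref{lemma} identifying $X^*(\K)=\M(T)$, the Sz.-Nagy--Foiaş-type observation that $\liminf\|T^{*n}x\|=0$ implies $\lim\|T^{*n}x\|=0$ for power-bounded $T$, and the orthogonal-complement structure of $\mathcal{N}(X)$ and $\overline{X^*(\K)}$ — has already been done by the time we reach this statement. The only point to be slightly careful about is the direction of the $C_{\cdot 0}$ convention (it is $T^*$, not $T$, that must be $C_{0\cdot}$) and the implicit use that $\overline{\M(T)}=\{0\}$ is equivalent to $\M(T)=\{0\}$, which is immediate since $\M(T)$ is nonempty (it contains $0$). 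If one wanted a self-contained proof not routing through the Corollary, one could instead argue that $X$ is injective iff $X^*$ has dense range iff (by Lemma~\ref{lemma}) $\overline{\M(T)}=\h$ — no, that gives $C_{\cdot 1}$; for $C_{\cdot 0}$ the cleanest route really is: $X=0 \iff \K=\{0\} \iff X^*(\K)=\{0\} \iff \M(T)=\{0\}$, together with $X=0 \iff [x,x]=0$ for all $x \iff T^{*n}x\to 0$ for all $x$. I would present whichever of these is shortest, most likely the one-line version via Corollary~\ref{rozklad}.
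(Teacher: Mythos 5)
Your proof is correct and is exactly the route the paper intends: the paper states this theorem as an immediate consequence of Corollary~\ref{rozklad}, and your two-line equivalence (first summand equals $\h$ iff $\overline{\M(T)}=\{0\}$ iff $\M(T)=\{0\}$, combined with the upgrade of $\liminf$ to $\lim$ for power-bounded operators) is precisely that argument spelled out.
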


Another proof of this theorem (in the case of operators considered on Banach spaces) can be found in \cite{Vu}.

\begin{co}\label{T^-1} If $T$ is power-bounded and invertible, then\\
$\|T^{*n}x\| \to 0$ for all $x\in \h$ if and only if $\|T^{-n}x\| \to \infty$ for all
$x\in\h$.
\end{co}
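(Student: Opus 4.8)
The plan is to deduce Corollary~\ref{T^-1} from Theorem~\ref{wn0} together with the elementary description $\M(T) = \{x : \exists\,\{x_n\}\text{ bounded}, x=x_0, Tx_{n+1}=x_n\}$. When $T$ is invertible, the backward sequence through $x$ is forced to be unique: if $Tx_{n+1}=x_n$ then $x_{n+1}=T^{-1}x_n$, so $x_n = T^{-n}x_0 = T^{-n}x$. Hence $\M(T) = \{x \in \h : \{T^{-n}x\}_{n\in\N} \text{ is bounded}\}$. By Theorem~\ref{wn0}, $T$ is $C_{\cdot 0}$ (i.e.\ $\|T^{*n}x\|\to 0$ for all $x$) if and only if this set is $\{0\}$, that is, if and only if the only vector with a bounded sequence of negative iterates is $0$.

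So the core of the argument is the purely operator-theoretic equivalence: for an invertible power-bounded $T$,
\[
\{x\in\h : \{T^{-n}x\}_n \text{ bounded}\} = \{0\}
\quad\Longleftrightarrow\quad
\|T^{-n}x\|\to\infty \text{ for all } x\neq 0.
\]
The direction $(\Leftarrow)$ is immediate. For $(\Rightarrow)$ I would argue by contraposition: suppose $\|T^{-n}x_0\| \not\to\infty$ for some $x_0\neq 0$. Then there is a constant $C$ and a subsequence $n_k$ with $\|T^{-n_k}x_0\|\leq C$. Set $y_k := T^{-n_k}x_0$; this is a bounded sequence, and I want to extract from it a genuine bounded \emph{full} backward sequence of $T$ sitting over a nonzero vector. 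The natural move is to pass to a weak limit: by boundedness, after a further subsequence, $y_k \rightharpoonup y$ weakly for some $y\in\h$.

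The key step --- and the main obstacle --- is to check that this weak limit $y$ is nonzero and that $\{T^{-n}y\}_n$ is bounded, so that $y \in \M(T)\setminus\{0\}$. Boundedness of the negative orbit of $y$ follows because $T^{-n}$ is a bounded operator (power-boundedness of $T$ is not even needed here for a fixed $n$) and weak limits are preserved by bounded operators: $T^{-n}y_k \rightharpoonup T^{-n}y$, while for $k$ large $T^{-n}y_k = T^{-n-n_k}x_0 = T^{n_k}\big(T^{-2n_k}x_0\big)$ is hard to control directly --- better to note $T^{-n}y_k = T^{-(n_k - n)}x_0$ once $n_k \geq n$, hmm, that is $T^{-n}T^{-n_k}x_0$, which need not be bounded. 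The honest fix is to instead choose the subsequence so that the \emph{shifted} orbits are simultaneously bounded: a diagonal argument using power-boundedness of $T$ shows $\|T^{-n}y_k\| = \|T^{m}T^{-n-m}\cdots\|$ --- more cleanly, write $T^{-n}y_k = T^{-n}T^{-n_k}x_0 = T^{(M-n)}\,T^{-n_k-M}x_0$ for any $M$, but the clean statement is: since $\|T^j\|\le K$ for all $j\ge 0$, we get $\|T^{-n}y_k\| = \|T^{-n_k-n}x_0\|$, and $\|T^{-n_k}x_0\| = \|T^{n}\,T^{-n_k-n}x_0\|\le K\|T^{-n_k-n}x_0\|$ would give a lower bound, not upper. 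So the right approach is the diagonal/weak-limit construction of \emph{the} backward sequence: along a subsequence where $\|T^{-n_k}x_0\|\le C$ for all $k$, for each fixed $m$ the vectors $\{T^{-n_k+m}x_0 : n_k\ge m\}$ satisfy $\|T^{-n_k+m}x_0\| = \|T^{-(n_k-m)}x_0\|$, which is bounded only along the appropriate sub-subsequence --- take a diagonal subsequence $(n_k)$ along which $T^{-(n_k-m)}x_0 \rightharpoonup z_m$ weakly for every fixed $m\in\Z$. Then $Tz_{m+1}=z_m$, the sequence $\{z_m\}_{m\ge 0}$ is bounded (each $\|z_m\|\le \liminf\|T^{-(n_k-m)}x_0\| \le C$ for $m\ge 0$ after re-indexing so that $n_k - m$ ranges over the original subsequence), and $z_0\neq 0$ because $\|z_0\| \ge \|T^{-n}z_n\|^{-1}\cdots$ --- concretely $x_0 = T^{n_k}T^{-n_k}x_0$ so $\|x_0\|\le K\|T^{-n_k}x_0\|$, forcing $\|T^{-n_k}x_0\|\ge \|x_0\|/K > 0$, hence $\|z_0\|\ge \|x_0\|/K$ by a weak-lower-semicontinuity estimate applied carefully to a suitable inner product. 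This last nonvanishing estimate, reconciling weak limits with the two-sided bound from power-boundedness, is the delicate point; once $z_0 \in \M(T)\setminus\{0\}$ is in hand, Theorem~\ref{wn0} finishes the proof.
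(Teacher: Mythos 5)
Your reduction is the right one: for invertible $T$ the backward sequence over $x$ is unique, namely $x_n=T^{-n}x$, so $\M(T)=\{x:\{T^{-n}x\}_n \text{ bounded}\}$, and Theorem \ref{wn0} converts the claim into the equivalence ``$\M(T)=\{0\}$ iff $\|T^{-n}x\|\to\infty$ for all $x\neq 0$.'' But your proof of the $(\Rightarrow)$ direction has a genuine gap, and the step you yourself flag as ``delicate'' is in fact false as stated. Weak lower semicontinuity of the norm gives $\|z_0\|\le\liminf_k\|T^{-n_k}x_0\|$, i.e.\ an inequality in the wrong direction: the lower bound $\|T^{-n_k}x_0\|\ge\|x_0\|/K$ does \emph{not} prevent the weak limit $z_0$ from being $0$ (an orthonormal sequence has norms bounded below by $1$ and converges weakly to $0$). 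So the diagonal weak-limit construction does not produce a nonzero element of $\M(T)$, and no ``careful'' inner-product estimate will rescue it in this generality. Moreover, since $T$ is invertible the only backward sequence over $x_0$ is $\{T^{-n}x_0\}$ itself; a weak-limit construction necessarily sits over a different vector, which is exactly why you cannot control its being nonzero.

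The whole detour is unnecessary, and the missing observation is elementary (it is the paper's one-line finish). Suppose $\|T^{-n_k}x_0\|\le N$ along a subsequence with $n_k\to\infty$. For an arbitrary $n\in\N$ choose $k$ with $n_k\ge n$; then
$$\|T^{-n}x_0\|=\|T^{\,n_k-n}T^{-n_k}x_0\|\le\|T^{\,n_k-n}\|\,\|T^{-n_k}x_0\|\le N\sup_{m\in\N}\|T^m\|.$$
Hence the \emph{entire} sequence $\{T^{-n}x_0\}_n$ is bounded, so $x_0\in\M(T)\setminus\{0\}$ directly, and Theorem \ref{wn0} gives that $T$ is not $C_{\cdot 0}$. (You came within a hair of this when you wrote $\|T^{-(n_k-m)}x_0\|=\|T^mT^{-n_k}x_0\|\le KC$ but then dismissed it as holding ``only along a sub-subsequence''; in fact, as $m$ ranges over $0,\dots,n_k$ and $k$ over all indices, the exponents $n_k-m$ exhaust all of $\N$, which is the whole point.) With this fix your argument coincides with the paper's proof.
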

\begin{proof}
If $T$ is power-bounded, then  $T^*$ is power-bounded too.\\
By Theorem \ref{wn0} we obtain that $T^*$ is strongly stable if and only if each nontrivial sequence such that $Tx_{n+1}=x_n$ is unbounded.\\
But we have $x_n=T^{-n}x_0$. Thus the second condition means that $\sup\limits_{n\in \N}\|T^{-n}x\|=\infty$ for each nonzero $x\in \h$.\\
Now, if for some $x\in \h$ there is an increasing sequence $\{n_k\}_{k\in \N}$ such that $\sup\limits_{k\in \N}\|T^{-n_k}x\|<N$, then for each $n\in \N$ we have
$$\|T^{-n}x\|=\|T^{n_k-n}T^{-n_k}x\| \leq \|T^{n_k-n}\|\|T^{-n_k}x\|\leq N\sup\limits_{n\in \N}\|T^n\|, $$ since $n_k>n$ for some $k\in\N$.
\end{proof}

\begin{ex} Let $V$ be the classical integral Volterra operator defined, on the space $L^2[0,1]$, by \begin{center} $(Vf)(x):=\int_{0}^{x}f(t)dt$, for $f\in L^2[0,1]$. \end{center} It is easy to calculate that $(V^*f)(x)=\int_{x}^{1}f(t)dt$.\\
Hence $V+V^*=P$, where $P$ is the one-dimensional projection on subspace of constant functions. It is well-known that $\|(I+V)^{-1}\|=1$ (see Problem 150 in \cite{Ha}). The Allan-Pedersen relation (see \cite{Al}) $$S^{-1}(I-V)S=(I+V)^{-1},$$ where $Sf(t)=e^{t}f(t)$ show us that $I-V$ is similar to a contraction.\\
So it is power-bounded.  \\
Furthermore, Proposition 3.3 from
\cite{Le} yields to 
\begin{equation}\lim\limits_{n \to \infty} \sqrt{n}(I-V)^nVf = 0 \textnormal{, for all } f\in L^2[0,1]. \end{equation}
But as we mentioned, $I-V$ is power-bounded.
Moreover, $V$ has dense range. Therefore $I-V$ is $C_{0 \cdot}$. (To obtain this, instead of (2) we can use the Esterle-Katznelson-Tzafriri theorem (see \cite{Es}, \cite{KT}), since $\sigma(I-V)=\{1\}$.)\\
Now, by Corollary \ref{T^-1} we obtain
\begin{center}$ \|(I+V-P)^{-n}f\|=\|((I-V)^*)^{-n}f\| \to \infty $ for all $f \in L^2[0,1]\backslash\{0\}$. \end{center}
Additionally, form (1) in the proof of Lemma \ref{lemma} and (2) we have
\begin{center}$ \frac{1}{\sqrt{n}}\|(I+V-P)^{-n}f\| \to \infty $ for all $f \in V(L^2[0,1])\backslash\{0\}$. \end{center}
\end{ex}

\begin{re}
To obtain the first part of this result we also can use Theorem 3.4 form \cite{AD} and observe that each local spectrum $\sigma_x(I+V-P)$ is equal $\{1\}$. (Because $\{1\}=\sigma(I-V)=\sigma((I+V-P)^*)=\sigma(I+V-P)$.)
\end{re}

\begin{ex}
According to the above example we see that the contraction $(I+V)^{-1}$ is of class $C_{0 \cdot}$ and as before $\sigma(I+V)=\{1\}$. \\
So using Theorem 3.4 form \cite{AD} we obtain that $\|(I+V)^nf\|\to \infty$ for all nonzero $f \in L^2[0,1]$. \\
Now by Corollary \ref{T^-1} we have
\begin{center}$(I-V+P)^{-n}f=(I+V)^{-*n}f\to 0$ for all $f \in L^2[0,1]$.\end{center}
So the contraction $(I+V)^{-1}$ is of class $C_{0 0}$.
\end{ex}

\section{Main result}
To give a generalization of Theorem \ref{DK}, we will need the following lemma(due to K$\acute{\textnormal{e}}$rchy, see \cite{Ke}):

\begin{lem}\label{matrix}
If $T$ is power-bounded, then $T$ can be represented by the matrix \begin{equation} \begin{bmatrix}
T_{11} & T_{21} \\
0 & T_{22}
\end{bmatrix}, \end{equation} where $T_{11},T_{22}$ are power-bounded, $T_{11}$ is of class $C_{0 \cdot}$ and $T_{22}$ is of class $C_{1 \cdot}$.
\end{lem}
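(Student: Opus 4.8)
The plan is to triangulate $T$ along the orthogonal decomposition $\h=\h_1\oplus\h_2$, where $\h_1:=\{x\in\h: T^nx\to 0\}$ and $\h_2:=\h_1^\perp$. First I would check that $\h_1$ is a closed subspace by the same $\epsilon$-argument used in the Preliminaries: if $x_k\to x$ with each $x_k\in\h_1$ and $M:=\sup_n\|T^n\|$, then $\|T^nx\|\le M\|x-x_k\|+\|T^nx_k\|$, which is made small by choosing $k$ and then $n$ large. It is $T$-invariant because $T^n(Tx)=T^{n+1}x\to 0$ whenever $T^nx\to 0$. Hence, writing operators as matrices with respect to $\h_1\oplus\h_2$, the $(2,1)$-entry of $T$ vanishes, and I set $T_{11}:=T|_{\h_1}$, $T_{22}:=P_{\h_2}T|_{\h_2}$, $T_{21}:=P_{\h_1}T|_{\h_2}$.

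Since $\h_1$ is $T$-invariant, $\h_2$ is $T^*$-invariant, so $T_{22}^*=T^*|_{\h_2}$ and hence $T_{22}^n=P_{\h_2}T^n|_{\h_2}$ for every $n$; in particular $\|T_{22}^n\|\le\|T^n\|$, so $T_{22}$ (and trivially the restriction $T_{11}$) is power-bounded. That $T_{11}$ is of class $C_{0\cdot}$ --- indeed $T_{11}^n\to 0$ in the strong operator topology --- is immediate from the definition of $\h_1$.

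The remaining point, and the only one requiring real work, is that $T_{22}$ is of class $C_{1\cdot}$. Here I would invoke Corollary \ref{rozklad} for the power-bounded operator $T^*$: it yields $\h=\{x: T^nx\to 0\}\oplus\overline{\M(T^*)}=\h_1\oplus\overline{\M(T^*)}$, so that $\h_2=\overline{\M(T^*)}$. Next I claim $\M(T_{22}^*)=\M(T^*)$: if $\{y_n\}$ is a bounded backward sequence of $T^*$ then each $y_n$ lies in $\M(T^*)\subseteq\overline{\M(T^*)}=\h_2$ (witnessed by the tail $\{y_{n+k}\}_k$), and since $T_{22}^*=T^*|_{\h_2}$ the same sequence is a bounded backward sequence of $T_{22}^*$; the reverse inclusion is immediate. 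Therefore $\overline{\M(T_{22}^*)}=\overline{\M(T^*)}=\h_2$, and Theorem \ref{wn1}, applied to the power-bounded operator $T_{22}^*$ on the Hilbert space $\h_2$, gives that $T_{22}^*$ is $C_{\cdot 1}$, i.e. $T_{22}$ is $C_{1\cdot}$, as wanted.

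I expect the matching of $\M(T_{22}^*)$ with $\M(T^*)$ (equivalently, the identification $\h_1^\perp=\overline{\M(T^*)}$) to be the crux; everything else is routine bookkeeping with triangulations. An alternative that avoids citing Corollary \ref{rozklad} and Theorem \ref{wn1}: run the isometric-asymptote construction of Section 3 on $T$ itself, obtaining $W:\h\to\K_+$ and an isometry $U$ with $WT=UW$; one checks $\ker W=\h_1$, using that for power-bounded operators $\liminf_n\|T^nx\|=0$ forces $\|T^nx\|\to 0$ and hence $\textnormal{glim}\,\|T^nx\|^2=0$. Then $W$ is injective on $\h_2$ and $WT_{22}=UW$ on $\h_2$, so $\|WT_{22}^ny\|=\|Wy\|$ for all $n$; a subsequence along which $T_{22}^ny\to 0$ forces $Wy=0$, i.e. $y\in\h_1\cap\h_2=\{0\}$.
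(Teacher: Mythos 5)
Your proposal is correct, and it reaches the key conclusion by a genuinely different route than the paper. The setup coincides exactly: the paper also decomposes $\h=\mathcal{N}\oplus\mathcal{N}^{\perp}$ with $\mathcal{N}=\{x\in\h:\ T^nx\to 0\}$, takes $T_{11}=T|_{\mathcal{N}}$ (so $C_{0\cdot}$ is immediate) and gets power-boundedness of $T_{22}$ from $T_{22}^*=T^*|_{\mathcal{N}^\perp}$, just as you do. The divergence is in the crux, the $C_{1\cdot}$ property of $T_{22}$. The paper proves it by hand: assuming $T_{22}^{n}f\to 0$, it fixes $n_0$ with $\|T_{22}^{n_0}f\|$ small, writes the telescoping identity $T^{m+n_0}f=\sum_{k=1}^{n_0}T^{m+k-1}(T-T_{22})T_{22}^{n_0-k}f+T^mT_{22}^{n_0}f$, observes that each vector $(T-T_{22})T_{22}^{n_0-k}f$ lies in $\mathcal{N}$ and is therefore annihilated in the limit by large powers of $T$, and concludes $T^nf\to 0$, i.e. $f\in\mathcal{N}\cap\mathcal{N}^\perp=\{0\}$. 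You instead apply Corollary \ref{rozklad} to $T^*$ to identify $\h_2=\overline{\M(T^*)}$, verify $\M(T_{22}^*)=\M(T^*)$ via the tail-of-a-backward-sequence observation, and quote Theorem \ref{wn1} for $T_{22}^*$. This is legitimate: those results are established in Section 3 independently of this lemma, and the direct sum in Corollary \ref{rozklad} is indeed orthogonal (it is $\h=\mathcal{N}(X)\oplus\overline{X^*(\K)}$), which is what your identification $\h_1^\perp=\overline{\M(T^*)}$ requires. Your route is shorter but imports the isometric-asymptote and Sebesty\'en-range machinery; the paper's telescoping argument is elementary and self-contained, which matters if one wants the lemma to stand on its own (as in K\'erchy's original treatment). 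Your sketched alternative using the asymptote of $T$ itself (with $\ker W=\h_1$ and $\|WT_{22}^ny\|=\|Wy\|$) is also sound and is essentially a direct, un-packaged form of the same idea.
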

\begin{proof}
Let (3) be the matrix of $T$ with respect to the orthogonal decomposition $\h=\mathcal{N}\oplus \mathcal{N}^{\bot}$,
where $\mathcal{N}:=\{x\in \h |\  T^nx\to 0 \}$. By definition $\mathcal{N}$ is invariant for $T$. So $T\mid_{\mathcal{N}}=T_{11}$, thus $T_{11}$ is of class $C_{0 \cdot}$ (and power-bounded).
Moreover, we have:
$$T_{22}= P_{\mathcal{K}}T \in \mathcal{B}(\mathcal{K}) \textnormal{, where } \mathcal{K}:=\mathcal{N}^{\perp}\not=\{0\}.$$
The subspace $\K$ is invariant for $T^*$. So we obtain $T^*\mid_{\K} = T^*_{22} $, thus $T_{22}$ is power-bounded.

Now, we will show that $T_{22}$ is $C_{1 \cdot}$.

To see this, let us assume that $T_{22}^nf\to 0$ for some $f\in \mathcal{K}$.\\
For an arbitrary $\epsilon>0$, there is $n_0\in \N$ such that $\|T_{22}^{n_0}f\|<\frac{\epsilon}{2M}$, where $M:=\sup\limits_{n\in \N}\|T^n\|$.\\
Let us suppose for a while that $T_{22}\in \mathcal{B}(\mathcal{K},\h)$.
By definition of $T_{22}$ we have: $(T-T_{22})x=P_{\mathcal{N}}Tx \in
\mathcal{N}$ for each $x\in
\mathcal{K}$.\\
Hence for each $ k\in \{1,2,3,...,n_0\}$ there exists $m_k\in \N$ such that $\|T^{m'+k-1}(T-T_{22})T_{22}^{n_0-k}f\|\leq \frac{\epsilon}{2n_0}$ for all $m'\geq m_k$.\\
Now, for $m:=\max\{m_k |\  k=1,2,...,n_0\}$ we have:
\begin{center}
$\|T^{m+n_0}f\|=\|T^m(T^{n_0}-T^{n_0-1}T_{22}+T^{n_0-1}T_{22}-T^{n_0-2}T_{22}^2+...+\newline
+ TT_{22}^{n_0-1}-T_{22}^{n_0}+T_{22}^{n_0})f\|=
\|\sum\limits_{k=1}^{n_0}T^{m+k-1}(T-T_{22})T_{22}^{n_0-k}f+T^mT_{22}^{n_0}f\|
\leq
\newline \leq
\sum\limits_{k=1}^{n_0}\|T^{m+k-1}(T-T_{22})T_{22}^{n_0-k}f\|+\|T^mT_{22}^{n_0}f\|
\leq n_0\frac{\epsilon}{2n_0} + M\frac{\epsilon}{2M} = \epsilon.$
\end{center}
Thus $T^nf \to 0$, contrary to $f\in \mathcal{K}$.\\
So $T_{22}$ is of class $C_{1\cdot}$.
\end{proof}

Now, we can give our generalization of Theorem \ref{DK}:

\begin{tw}
Let $T$ be a power-bounded operator. The following conditions are equivalent:
\begin{itemize}
\item for any bounded backward sequence $\{x_n\}_{n\in \N}$ of $T$, the sequence of norms $\{\|x_n\|\}_{n\in \N}$ is constant;
\item $T$ can be decomposed as $T= \begin{bmatrix}
T_{11} & 0 \\
T_{21} & U
\end{bmatrix},$ where $U$ is a unitary and $T_{11}$ is of class $C_{\cdot 0}$.
\end{itemize}
\end{tw}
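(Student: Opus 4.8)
The plan is to establish the two implications directly from the structural results of Section 3 (Corollary~\ref{rozklad}, Theorems~\ref{wn0} and~\ref{wn1}), rather than by reduction to Theorem~\ref{DK}.

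\emph{Decomposition $\Rightarrow$ constant norms.} Suppose $T=\begin{bmatrix} T_{11} & 0 \\ T_{21} & U\end{bmatrix}$ with respect to $\h=\h_1\oplus\h_2$, with $U$ unitary and $T_{11}$ of class $C_{\cdot 0}$ (note $T_{11}$ is automatically power-bounded, since $\|T_{11}^n\|\le\|T^n\|$). Let $\{x_n\}_{n\in\N}$ be a bounded backward sequence of $T$ and write $x_n=y_n\oplus z_n$. The vanishing $(1,2)$-entry makes $\h_2$ invariant with $T|_{\h_2}=U$, so $Tx_{n+1}=x_n$ splits into $T_{11}y_{n+1}=y_n$ and $T_{21}y_{n+1}+Uz_{n+1}=z_n$. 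For each fixed $m$, the tail $\{y_{m+k}\}_{k\in\N}$ is a bounded backward sequence of $T_{11}$, whence $y_m\in\M(T_{11})$; but $T_{11}$ is $C_{\cdot 0}$, so $\M(T_{11})=\{0\}$ by Theorem~\ref{wn0}, forcing $y_m=0$ for all $m$. Then $z_n=Uz_{n+1}$, and since $U$ is isometric, $\|x_n\|=\|z_n\|=\|z_{n+1}\|=\|x_{n+1}\|$.

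\emph{Constant norms $\Rightarrow$ decomposition.} Assume every bounded backward sequence of $T$ has constant norm. By Corollary~\ref{rozklad}, $\h=\mathcal{N}\oplus\overline{\M(T)}$ with $\mathcal{N}=\{x\in\h\mid T^{*n}x\to 0\}$; as $\mathcal{N}$ is $T^*$-invariant, $\overline{\M(T)}$ is $T$-invariant, so
$$T=\begin{bmatrix} T_{11} & 0 \\ T_{21} & W\end{bmatrix},\qquad T_{11}=P_{\mathcal{N}}T|_{\mathcal{N}},\quad W=T|_{\overline{\M(T)}}.$$
Here $T_{11}^*=T^*|_{\mathcal{N}}$, which is strongly stable by the very definition of $\mathcal{N}$, so $T_{11}\in C_{\cdot 0}$. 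It remains to show $W$ is unitary; we take $U:=W$. Since any bounded backward sequence of $T$ has all its terms in $\M(T)\subseteq\overline{\M(T)}$ (shift the sequence), bounded backward sequences of $T$ and of $W$ are the same objects, and $\M(W)=\M(T)$ is dense in $\overline{\M(T)}$. The crux is the claim $\|Wz\|=\|z\|$ for each $z\in\M(W)$: if $\{z_n\}_{n\in\N}$ is a bounded backward sequence of $W$ with $z_0=z$, then $y_0:=Wz_0$, $y_n:=z_{n-1}$ $(n\ge 1)$ is again a bounded backward sequence of $T$, so the hypothesis gives $\|Wz\|=\|y_0\|=\|y_1\|=\|z\|$. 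By density, $W$ is an isometry on $\overline{\M(T)}$. Finally, every $z\in\M(W)$ is of the form $Wz_1$ with $z_1\in\overline{\M(T)}$, so $\M(W)\subseteq W(\overline{\M(T)})$; the right-hand side is closed (the isometric image of a complete space), hence equals $\overline{\M(T)}$, and $W$ is a surjective isometry, i.e.\ unitary.

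The main obstacle is the crux step above: spotting that the hypothesis applied to the shifted sequence $(Wz_0,z_0,z_1,\dots)$ says precisely that $W$ acts isometrically on the dense set $\M(W)$, after which unitarity drops out of the ``self-improving'' inclusion $\M(W)\subseteq W(\overline{\M(T)})$. Everything else is bookkeeping with the decompositions of Section 3. (Alternatively, knowing $W$ is an isometry and, by Theorem~\ref{wn1}, of class $C_{\cdot 1}$ --- since $\overline{\M(W)}=\overline{\M(T)}$ --- one could exclude a unilateral-shift summand via the Wold decomposition; the surjectivity argument is shorter.)
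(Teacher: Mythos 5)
Your proof is correct, and for the harder implication it takes a genuinely different route from the paper. The direction ``decomposition $\Rightarrow$ constant norms'' is essentially identical to the paper's: split $x_n=y_n\oplus z_n$, kill $y_n$ via Theorem~\ref{wn0}, and use that $U$ is isometric. For the converse, the paper invokes Lemma~\ref{matrix} (K\'erchy's splitting, applied to $T^*$) to know that $T_{22}=T|_{\h_2}$ is of class $C_{\cdot 1}$, then uses Theorem~\ref{wn1} to get $\overline{\M(T_{22})}=\h_2$, concludes that $T_{22}$ is an isometry from the constant-norm hypothesis, and finally excludes the unilateral-shift summand of the Wold decomposition using the $C_{\cdot 1}$ property. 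You work with the same orthogonal decomposition $\h=\mathcal N\oplus\overline{\M(T)}$ (your Corollary~\ref{rozklad} route and the paper's Lemma~\ref{matrix} route produce the same subspaces), and you use the same key trick of prepending $Wz_0$ to a backward sequence to read off $\|Wz_0\|=\|z_0\|$ --- a point the paper passes over rather quickly --- but you then replace both K\'erchy's lemma and the Wold decomposition by two elementary observations: $\M(W)=\M(T)$ is dense in $\overline{\M(T)}$ by the very construction of the decomposition, and $\M(W)\subseteq W(\overline{\M(T)})$ with the right-hand side closed, which forces surjectivity of the isometry $W$. This buys a shorter and more self-contained argument (only Lemma~\ref{lemma}, Corollary~\ref{rozklad} and Theorem~\ref{wn0} are needed), at the cost of not exhibiting the $C_{\cdot 1}$ structure of $W$ that the paper's route makes explicit; both are complete proofs.
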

\begin{proof}
To the proof of the first implication, let $T= \begin{bmatrix}
T_{11} & 0 \\
T_{21} & T_{22}
\end{bmatrix}$ be the matrix form Lemma \ref{matrix}, where $T_{11}\in\mathcal{B}(\h_1)$ is $C_{\cdot 0}$ and $T_{22}\in\mathcal{B}(\h_2)$ is $C_{\cdot 1}$.
Now, $\h_2$ is invariant for $T$, thus $T_{22}=T|_{\h_2}$.
Hence, each bounded backward sequence of $T_{22}$ is bounded backward sequence of $T$. So, by our assumption $T_{22}$ is an isometry on $\overline{\M(T_{22})}$. But by Theorem \ref{wn1} we have $\overline{\M(T_{22})}=\h_2$.
So $T_{22}$ is an isometry.

Finally, it can be decomposed as $T_{22}=U \oplus S_+$, where $U$ is unitary and $S_+$ is the unilateral shift. But $T_{22}$ is $C_{\cdot 1}$. So we have $T_{22}=U$.

To prove the converse implication, let us assume that $\{x_n\}_{n\in \N}$ is the bounded backward sequence of $T$. Let $x_n=a_n+b_n$, where $a_n\in \h_1$ and $b_n\in \h_2$. We have: $$T_{11}a_{n+1}+(T_{21}a_{n+1}+Ub_{n+1})=Ta_{n+1}+Tb_{n+1}=Tx_{n+1}=x_n=a_n+b_n.$$
So $T_{11}a_{n+1}=a_n$ and $\|a_n\|\leq \|x_n\|$. It means that $\{a_n\}_{n\in \N}$ is a bounded backward sequence of $T_{11}$, but $T_{11}$ is of class $C_{\cdot 0}$. So by Theorem \ref{wn0} we obtain $a_n\equiv 0$. Thus $\|x_{n+1}\|=\|b_{n+1}\|=\|Ub_{n+1}\|=\|b_n\|=\|x_n\|$.
\end{proof}

\section{Acknowledgements}
I am very grateful to professor J. Zem$\acute{\textnormal{a}}$nek for his hospitality and discussions during my stay in the Institute of Mathematics of the Polish Academy of Sciences.

\bigskip
Instytut Matematyki, Uniwersytet Jagiello\'{n}ski,\\
\L ojasiewicza 6, 30-348, Krak\'{o}w, Poland\\
\emph{E-mail address: patryk.pagacz@im.uj.edu.pl}

\end{document}